\documentclass{amsart}
\usepackage{amsmath, amssymb, amsthm}
\usepackage{geometry}
\usepackage{mathrsfs}
\usepackage{xcolor}
\usepackage{booktabs}
\usepackage{hyperref}
\usepackage{tikz-cd}
\newcommand{\C}{\mathbb{C}}

\newcommand{\Tr}{\operatorname{Tr}}

\newcommand{\cH}{\mathcal{H}}

\newtheorem{lem}{Lemma}
\newtheorem{prop}{Proposition}
\newtheorem{thm}{Theorem}
\newtheorem{cor}{Corollary}
\newtheorem{remark}{Remark}
\newtheorem{dfn}{Definition}
\begin{document}

%\mainmatter             

 % start of a contribution
%
\title{The Infinite Sphere and Galois Belyi maps}  % abbreviated title (for running head)
%                                     also used for the TOC unless
%                                     \toctitle is used
%
\author{No\'emie C. Combe,} 

 \email{ noemie.combe@devinci.fr \\ De Vinci Higher Education \\ De Vinci Research Center, Paris, France}
%
%\authorrunning{No\'emie C.Combe} % abbreviated author list (for running head)
%
%%%% list of authors for the TOC (use if author list has to be modified)
%\tocauthor{No\'emie C.Combe}
%

%\address{noemie.combe@devinci.fr, \\ De Vinci Higher Education, De Vinci Research Center, Paris, France},
%\texttt{https://sites.google.com/view/noemie-combe }

\maketitle              % typeset the title of the contribution

\begin{abstract}
We show that the space of Belyi maps admits a natural parametrization by an infinite-dimensional sphere arising from Voiculescu’s theory of noncommutative probability spaces. We show that this sphere decomposes into sectors, each of which corresponds to a class of Belyi maps distinguished up to isomorphism by their monodromy, encoded by a finite-index subgroup of $\mathbb{F}_2$. For Galois Belyi maps, our correspondence between spectral sectors of the infinite sphere and algebraic quotients of $\mathbb{F}_2$ yields a genuine bijection. Within this framework, distinct sectors of the sphere capture the algebraic constraints imposed on the monodromy, thereby providing a geometric organization of Belyi maps according to their associated group-theoretic data. 
\end{abstract}
\medskip 

{\bf keywords:} {Free probability, Belyi maps, Non Commutative Geometry}

\medskip
 
{\bf Acknowledgments:}{ I wish to thank Igor Nikolaev for drawing my attention on D. Voiculescu's free probability theory and to Vasily Dolgushev for many interesting discussions on Belyi maps and dessins d'enfant.}

\section{Introduction}
Belyi maps occupy a central position at the interface of algebraic geometry,
topology, and arithmetic. By Belyi’s theorem, algebraic curves defined over
$\overline{\mathbb{Q}}$ are precisely those admitting a branched covering of the
Riemann sphere ramified over at most three points, \cite{CMM,S}. 

From a topological perspective, such maps are classified up to isomorphism by their monodromy data. The latter is determined by a finite-index subgroup of $\mathbb{F}_2$, or equivalently, by a pair of permutations generating the monodromy group. In the Galois case, this subgroup is normal, and the map corresponds to a quotient $\mathbb{F}_2/N$. Despite their fundamental nature, the global geometry of the space of Belyi maps and the organization of its
monodromy classes remain poorly understood.

\, 

The aim of this paper is to introduce a geometric and probabilistic
construction that parametrizes the space of (topological and algebraic) Belyi maps in terms
of sectors of an infinite-dimensional sphere. More precisely, we construct an
infinite-dimensional sphere $\mathscr{S}_\infty$, equipped with a natural Haar-type
probability measure, obtained as a projective limit of finite-dimensional
matrix spheres defined in spaces of Hermitian matrices. Within this framework,
each sector of $\mathscr{S}_\infty$ is shown to correspond canonically to a quotient
algebra $\mathbb{F}_2/N$, where $N$ is a normal subgroup of the free group $\mathbb{F}_2$.
Words generated within a given sector are precisely the images of free words
under the quotient map $\mathbb{F}_2\to \mathbb{F}_2/N$. Since Belyi maps are classified by their
monodromy representations, this construction provides a probabilistic
mechanism for generating, in a unified manner, all classes of (Galois) Belyi maps with
prescribed monodromy.

\, 

The method of proof is to place this construction within a broader
geometric framework inspired by free probability theory \cite{V,DNV,T}. Free probability,
introduced by Voiculescu in the 1980s, was originally developed to address
fundamental problems in operator algebras, most notably the structure of free
group factors $L(\mathbb{F}_n)$. At its core, free probability furnishes a
noncommutative analogue of classical probability theory, in which random
variables are replaced by operators and independence by freeness. We show
that the probabilistic geometry of free probability provides a natural setting
in which the moduli of Belyi maps—and more generally Hurwitz maps—can be
organized and studied. In particular, the intrinsic noncommutative geometry
encoded in spaces of tracial states and matrix microstates reveals new
structural features of parameter spaces of branched covers that are invisible
from a purely combinatorial viewpoint.

\, 

Finally, let us mention that our work is motivated by, and conceptually related to, several
classical results at the intersection of random matrix theory, representation
theory, and enumerative geometry. Notably, the Okounkov integral \cite{O}, via the Miwa
correspondence, and the Itzykson--Zuber integral provide important precedents, see for instance \cite{GGN,N}.
The latter admits a combinatorial expansion whose structure was elucidated in
\cite{GGN}, where it is interpreted as a variant of the classical problem of
counting branched covers of the sphere with prescribed ramification data.
While these results highlight deep combinatorial connections between matrix
integrals and Hurwitz theory, the present work departs fundamentally from this
approach. Our construction is not combinatorial in nature; instead, it is
purely geometric, relying on the global structure of infinite-dimensional
spheres, operator-algebraic limits, and the geometry of noncommutative
probability spaces.

In this way, the paper proposes a new perspective on Belyi maps, replacing
discrete enumeration by a continuous geometric–probabilistic framework in
which monodromy, freeness, and large deviations play a central organizing role.

\paragraph{Main result.}
The central result of this paper establishes a precise correspondence between
large-deviation sectors of the infinite-dimensional sphere and algebraic
quotients of the free group.

\, 

Given a self-adjoint word $w\in\mathbb{C}\langle X,Y\rangle$ and a spectral law
$\nu\neq\mu_w$, where \(\mu_w\) is the empirical spectral distribution for the word \(w\) on large random matrices
from the spherical ensemble (see Sec. \ref{S:ESD}) and $\nu$ is a probability measure on the real numbers, which serves as the {\it target} spectral law for the empirical eigenvalue distribution of the matrix word $w(A_n, B_n)$. We show that conditioning the spherical ensemble on the event
that $w(A_n,B_n)$ has spectral distribution asymptotic to $\nu$ forces the
large-$n$ limit of the matrices to generate a quotient algebra $L(\mathbb{F}_2/N)$ for a
nontrivial normal subgroup $N\triangleleft \mathbb{F}_2$.

\, 

In this regime, the word $w$ converges not to the free operator $w(a,b)$, but to
its image in the quotient algebra, with spectral law exactly $\nu$.
Equivalently, each non-typical sector \ref{E:sector} of the infinite sphere gives rise to a
distinct algebraic quotient encoding the relations required to enforce the
prescribed spectral behavior.
As a consequence, the inverse-limit sector associated with $\nu$ generates the
free group factor $L(\mathbb{F}_2)$ if and only if $\nu=\mu_w$; otherwise it generates the
strictly smaller von Neumann algebra $L(\mathbb{F}_2/N)$ (see Theorem \ref{T:1}).

\, 

\paragraph{Significance for Belyi maps}
There is a bijection between isomorphism classes of Galois
Belyi maps and finite--index normal subgroups of the free group $\mathbb{F}_2$.
In the Galois case, our correspondence between spectral sectors of the infinite
sphere and algebraic quotients of $\mathbb{F}_2$ yields a genuine bijection: each sector
determines, and is determined by, a normal subgroup $N \triangleleft \mathbb{F}_2$, hence
by a unique Galois Belyi map up to isomorphism (see Theorem \ref{T:2}).

\, 

In contrast, for non--Galois Belyi maps the monodromy group alone does not
determine the covering, since distinct Belyi maps may have isomorphic abstract
monodromy groups.  In this setting, our construction provides a refinement of the
classical monodromy classification. Namely, spectral sectors distinguish Belyi
maps that share the same monodromy group but differ as covers, thereby yielding
a finer organization of non--Galois Belyi maps beyond group--theoretic
invariants.

\, 

\paragraph{Outline of the paper.} The aim is to prove the Theorems\ref{T:1} and \ref{T:2}. The tools of the proof are introduced in Sect~\ref{sec:preliminaries}-- ~\ref{sec:spherical}.
Precisely, in Section~\ref{sec:preliminaries} we review the necessary background on Belyi maps,
and free probability theory, including the construction of matrix
microstates and the basic properties of the free group factors $L(\mathbb{F}_n)$. In
Section~\ref{sec:spherical} we introduce the finite- and infinite-dimensional
spherical ensembles and define the microstate sectors $\Gamma_n(\nu,\varepsilon)$
that parametrize classes of Belyi maps according to their monodromy. 
Section~\ref{sec:main} contains the proof of the main theorem, establishing the correspondence
between large-deviation sectors and quotient algebras $L(\mathbb{F}_2/N)$, and
illustrates the resulting algebraic constraints on words in the free group. 
Finally, we provide a discussion of implications for the geometry of moduli spaces of (algebraic) Belyi maps, based on our construction.

\,

\section{Belyi Maps and Free Probability Preliminaries}\label{sec:preliminaries}
\subsection{Motivation}

Belyi maps play a central role in algebraic geometry by providing a bridge
between complex geometry, topology, and arithmetic. By Belyi’s theorem, a
smooth complex curve is defined over $\overline{\mathbb{Q}}$ if and only if it
admits a branched covering of the Riemann sphere ramified over at most three
points. From a topological viewpoint, such maps are classified by their
monodromy representations, which may be encoded by words in the free group
$\mathbb{F}_2$, or more generally by their images in quotients $\mathbb{F}_2/N$ for normal
subgroups $N\triangleleft \mathbb{F}_2$. This monodromy description reduces the study of
Belyi maps to discrete group-theoretic data, but offers limited insight into
the global organization of the space of all such maps.

\, 

The complexity and abundance of Belyi maps suggest the need for a framework
that goes beyond combinatorial classification and captures their global
structure in a geometric manner. In particular, one would like a mechanism
that organizes Belyi maps according to their monodromy while allowing for
continuous deformations, asymptotic limits, and natural measures. Such a
framework should reflect both the rigidity imposed by algebraic constraints
and the flexibility inherent in large families of coverings.

\, 

The conceptual goal of this paper is to provide such a framework by connecting
the monodromy classification of Belyi maps with the notion of {\it microstate} constructions arising
in random matrix theory and free probability. By interpreting words in the free
group as limits of matrix words evaluated on spherical ensembles, we relate
classes of Belyi maps to geometric sectors of an infinite-dimensional sphere.
In this way, probabilistic structures on matrix ensembles give rise to a
geometric organization of Belyi maps indexed by quotients of the free group,
thereby linking monodromy data with noncommutative probability spaces.

\subsection{Belyi maps, monodromy, and Hurwitz spaces}
\subsubsection{Belyi maps: topological and algebraic viewpoints}
Belyi maps admit equivalent formulations from both topological and algebraic perspectives.

\, 
\begin{itemize}
\item From the topological viewpoint a \emph{Belyi map} is a finite ramified covering
\[
f \colon X \longrightarrow \mathbb{C}\mathbb{P}^1
\]
of compact Riemann surfaces, branched only over three points.
By composing with a M\"obius transformation, the branch locus may be taken to be
$\{0,1,\infty\}$. Equivalently, a Belyi map is a three--point branched cover of the Riemann sphere. We can consider Belyi maps as a subclass of Hurwitz maps.

\item Algebraically, a \emph{Belyi map} is a nonconstant morphism $f \colon X \longrightarrow \mathbb{P}^1$
of smooth, projective algebraic curves defined over $\overline{\mathbb{Q}}$, \'{e}tale outside the set $\{0,1,\infty\}$.
\end{itemize}
Belyi’s theorem asserts that a compact Riemann surface admits such a morphism
if and only if it can be defined over $\overline{\mathbb{Q}}$.

However, note that when the base field is $\mathbb{C}$, these definitions are equivalent. Indeed, 
every three--point topological cover admits an algebraic structure defined over
$\overline{\mathbb{Q}}$, and conversely every algebraic Belyi map determines a
topological covering of $\mathbb{C}\mathbb{P}^1$ branched at three points.

\subsubsection{Monodromy classification and the free group $\mathbb{F}_2$}

Let ${\bf P}_\ast = \mathbb{C}\mathbb{P}^1 \setminus \{0,1,\infty\}.$ The fundamental group of ${\bf P}_\ast$  is the free group on two generators,
\[
\pi_1({\bf P}_\ast) \cong \mathbb{F}_2,
\]
generated by homotopy classes of small loops around
$0$ and $1$, respectively denoted $\gamma_0$ and $\gamma_1$.

\, 

Given a degree--$d$ Belyi map $f$, the associated monodromy representation is
\[
\rho_f \colon \mathbb{F}_2 \longrightarrow \mathbb{S}_d,
\]
where $\mathbb{S}_d$ is the  permutation group with $d$ elements. 
Up to isomorphism, the map $f$ is determined by the pair of permutations
\[
s_0 = \rho_f(\gamma_0), \qquad
s_1 = \rho_f(\gamma_1),
\]
and $s_\infty := (s_0 s_1)^{-1}$, which encodes the monodromy around
the point at infinity. Note that any transitive permutation triple
$(s_0,s_1,s_\infty) \in \mathbb{S}_d^3$ satisfying
$s_0 s_1 s_\infty = 1$ arises from a Belyi map.

%Let $N = \ker \rho_f \;\triangleleft\; \mathbb{F}_2.$ Then $N$ is a finite--index normal subgroup, and
%\[
%\mathbb{F}_2 / N \;\cong\; \mathrm{Deck}(f) \;\subset\; \mathbb{S}_d.
%\]
Degree $d$ Belyi maps can be described in three equivalent ways by:
\begin{itemize}
\item finite--index subgroups $\tilde{J} \leq \mathbb{F}_2$.
\item Finite quotients $\mathbb{F}_2 \twoheadrightarrow G$ together with a generating
pair $(g_0,g_1)$, where  $G\cong \mathbb{F}_2/N$ with  $N=ker(\mathbb{F}_2\to G)$ stands for the monodromy group of the Belyi map. 
\item Permutation triples $(\sigma_0,\sigma_1,\sigma_\infty)$ in the permutation group $\mathbb{S}_d$
satisfying $\sigma_0\sigma_1\sigma_\infty=1$, modulo simultaneous conjugation.
\end{itemize}

\subsubsection{Galois Belyi maps and normal subgroups}

A Belyi map $f \colon X \longrightarrow \mathbb{P}^1$
is called \emph{Galois} if the monodromy representation
\[
\rho \colon \pi_1({\bf P}_\ast) \longrightarrow \mathrm{Deck}(f)
\]
is surjective with kernel
\[
N = \ker \rho
\]
a normal subgroup of finite index in $\mathbb{F}_2$.

In this case, the deck transformation group is canonically isomorphic to
\[
\mathrm{Deck}(f) \cong \mathbb{F}_2 / N.
\]

Similarly, given any normal subgroup $N \triangleleft \mathbb{F}_2$ of finite index,
one may construct the corresponding Galois covering of ${\bf P}_\ast$,
which compactifies uniquely to a Galois Belyi map branched only at
$\{0,1,\infty\}$. Hence there is a bijection between isomorphism classes of Galois Belyi maps and
finite--index normal subgroups of the free group $\mathbb{F}_2$. We refer to \cite{S} for a full exposition on the topic. 

\subsubsection{Hurwitz spaces}

Hurwitz spaces provide moduli spaces parametrizing branched coverings of the
Riemann sphere with prescribed branching data and our construction van be easily generalized to the space of Hurwitz maps.
 In the case of Belyi maps, these
spaces decompose according to degree, monodromy group, and ramification type.
From a combinatorial viewpoint, points of Hurwitz spaces correspond to
equivalence classes of permutation data or dessins d’enfants, while from a
geometric perspective they encode families of algebraic curves together with
distinguished maps to $\mathbb{P}^1$.

Although Hurwitz spaces are traditionally studied via combinatorial or
algebro--geometric methods, their structure also suggests the existence of a
global geometric organization indexed by monodromy data. This observation
motivates the probabilistic and noncommutative approach developed in the
present work.

\subsection{Finite-Dimensional Matrix Ensembles}

\subsubsection{On the Measurable Space $(\cH_m,d{\boldsymbol{\mu}})$.}
The basic object of our construction is the real vector space $\cH_m$ of
$m\times m$ Hermitian matrices, equipped with the Hilbert--Schmidt inner product
$\langle M_1,M_2\rangle=\Tr(M_1M_2)$. Consider the space of probability density functions on \(\cH_m\):
\[
\mathcal{P}(\cH_m)
=
\left\{
\rho : \cH_m \to \mathbb{R}_{+}
\;\middle|\;
\int_{\cH_m} \rho(M)\, dM = 1,\;
\rho \in L^1(\cH_m)
\right\},
\]
where \(\rho\) is a density corresponding to a probability measure $d{\boldsymbol{\mu}}^{\rho}_m(M) = \rho(M)\, dM$, where $M\in \cH_m$.

There exists a family of probability density functions of exponential type on $\mathcal{H}_m$, defined as follows:
\begin{equation}\label{E:1}
    d{\boldsymbol{\mu}}_m(M)
= \frac{1}{Z_m} 
\exp\!\Bigl( - \frac{m}{2} \operatorname{Tr}(M^2) \Bigr)\, dM,
\end{equation}
where $dM$ is the Lebesgue measure on the independent real parameters of $M$, and 
$Z_m$ is the normalization constant. We call any element in the measurable space a Wigner matrix. 

Furthermore, the space of those measures $(\mu_m,\tilde{d})$ forms a metric space, where the metric is defined by: 
\[\tilde{d}({\boldsymbol{\mu}}_m,{\boldsymbol{\mu}}'_m)=2\arccos\int_{\mathcal{H}_m}\sqrt{d{\boldsymbol{\mu}}_m(dM)}\sqrt{d{\boldsymbol{\mu}}'_m(dM)}.\]

\subsubsection{Wigner Matrices}
We focus specifically on Wigner random matrices. These Wigner random matrices form a model where entries are complex random variables and the eigenvalues follow semicircular law \cite{AG,G,T}. It is defined as Hermitian square matrix. The structure of such a matrix is as follows:

\begin{enumerate} 
\item the entries on and above the diagonal are independent random variables, 
\item these random variables have zero mean and a finite, normalized variance, 
\item the diagonal entries may follow a different distribution than the off-diagonal ones, but they also have zero mean.
\end{enumerate}

\subsection{Finite-dimensional matrix ensembles and the spherical ensemble}

We consider the associated spherical ensemble
$\mathscr{S}_m\subset \cH_m$, defined by
\[
\mathscr{S}_m
=
\Bigl\{
M\in \cH_m \;:\; \frac{1}{m}\Tr(M^2)=1
\Bigr\}.
\]
Thus $\mathscr{S}_m$ is the unit sphere in the $m^2$--dimensional Euclidean space $\cH_m$.
It carries a canonical $\mathrm{U}(m)$--invariant probability measure $\sigma_m$,
which we refer to as the uniform (or Haar) measure on $\mathscr{S}_m$.

The normalization $\frac{1}{m}\Tr(M^2)=1$ ensures that the spectrum of a typical
matrix in $\mathscr{S}_m$ remains of order one as $m\to\infty$, leading to a nontrivial
large--$m$ limit. Moreover, the uniform measure $\sigma_m$ is characterized by
maximal symmetry: it is the unique probability measure on $\mathscr{S}_m$ invariant under
the adjoint action of $\mathrm{U}(m)$, and may be viewed as the microcanonical
ensemble associated with the quadratic energy constraint $\Tr(M^2)=m$.

From the probabilistic point of view, the spherical ensemble is asymptotically
equivalent to the Eq.\eqref{E:1} (also known as Gaussian Unitary Ensemble). More precisely, for large $m$,
sampling a matrix uniformly from $\mathscr{S}_m$ is equivalent, up to negligible
fluctuations, to sampling from the Gaussian Unitary Ensemble and conditioning on the Hilbert--Schmidt
norm. 

\subsubsection{Semicircular distribution}
Our interest in these Wigner matrices takes root in a statement by D.Voiculescu \cite{V,DNV}, connecting random matrix theory with non commutative geometry (see section \ref{S:Voiculescu}). Take a pair of independent Wigner $m\times m$ matrices $(A_m,B_m)$. Then, as $m\to \infty$, the joint non-commutative distribution of $(A_m,B_m)$ converges to that of a pair $(a,b)$ of freely independent non-commutative random variables in a suitable $C^*-$ or $W^*$-probability space.  In particular, an interesting approach is to use a corollary of the classical Wigner Semicircle Law. This states that the empirical spectral distribution of a normalized Wigner matrix converges to the Wigner semicircular distribution:
\[d{\boldsymbol\sigma}(t)=\frac{1}{2\pi}\sqrt{4-t^2}dt, \quad \text{for}\quad |t|\leq 2.\]
This motivates the fact that we will be working in priority with infinite dimensional spheres. 

\subsection{Empirical spectral distributions}\label{S:ESD}

The empirical spectral distribution provides the fundamental link between
linear algebra and probability theory in random matrix models.
\begin{dfn}
 Given an
$m\times m$ Hermitian matrix $M$ with eigenvalues
$\lambda_1(M),\dots,\lambda_m(M)$, its empirical spectral distribution (ESD) is
defined by
\[
\mu_M
=
\frac{1}{m}\sum_{i=1}^m \delta_{\lambda_i(M)}.
\]
\end{dfn}
Thus $\mu_M$ records the normalized eigenvalue counting measure of $M$.

\, 

By a classical result of Wigner, if $M$ is sampled from the spherical
ensemble $\mathscr{S}_m$, then $\mu_M$ converges almost surely, as $m\to\infty$, to the
semicircular distribution of variance one. This convergence underlies the
appearance of semicircular elements as canonical limits of large random
Hermitian matrices.

\, 

More generally, given a pair $(A_m,B_m)\in \mathscr{S}_m\times \mathscr{S}_m$ and a self-adjoint word
$w\in\mathbb{C}\langle X,Y\rangle$, the matrix $w(A_m,B_m)$ is Hermitian and
admits an empirical spectral distribution
\[
\mu_{w(A_m,B_m)}
=
\frac{1}{m}\sum_{i=1}^m
\delta_{\lambda_i\bigl(w(A_m,B_m)\bigr)}.
\]
This measure is the primary spectral observable associated with the word $w$.
Its large--$m$ limit, when it exists, defines a probability measure $\nu$ on
$\mathbb{R}$ which encodes the spectral law of the corresponding operator
$w(u,v)$ in the noncommutative limit. As will be seen below, the measure $\nu$
serves as a spectral signature of the algebraic relations satisfied by the
limiting operators and plays a central role in the decomposition of the
infinite--dimensional sphere into microstate sectors.

\subsection{Microstate sectors and noncommutative distributions}

The conceptual framework underlying our construction is Voiculescu’s
microstate philosophy. In this approach, abstract noncommutative probability
spaces are approximated by concrete families of finite-dimensional matrices.
A microstate of size $m$ is given by a pair
\[
(A_m,B_m)\in \mathscr{S}_m\times \mathscr{S}_m,
\]
where $\mathscr{S}_m$ denotes the spherical ensemble of $m\times m$ Hermitian matrices.

Such a matrix pair encodes a noncommutative probability distribution through
its moments. More precisely, for any noncommutative polynomial
$P\in\mathbb{C}\langle X,Y\rangle$, the quantity
\[
\frac{1}{m}\Tr\bigl(P(A_m,B_m)\bigr)
\]
defines a joint moment of the pair $(A_m,B_m)$. Convergence of all such moments
as $m\to\infty$ determines a limiting $*$--distribution in the sense of
noncommutative probability theory.

\begin{dfn}
Fixing a self-adjoint word $w\in\mathbb{C}\langle X,Y\rangle$, a probability
measure $\nu$ on $\mathbb{R}$ and \(\varepsilon>0\), we define the associated microstate sector of size $n$ by
\begin{equation}\label{E:sector}
\Gamma_m(\nu,\varepsilon)
=
\Bigl\{
(A_m,B_m)\in \mathscr{S}_m\times \mathscr{S}_m \;:\;
d\bigl(\mu_w(A_m,B_m),\nu\bigr)<\varepsilon
\Bigr\},
\end{equation}
where $\mu_w(A_m,B_m)$ denotes the empirical spectral measure of
$w(A_m,B_m)$ and $d$ is any metric compatible with the weak topology on
probability measures (for instance, the Lévy--Prokhorov distance). Thus $\Gamma_m(\nu,\varepsilon)$ consists of those
microstates whose spectral behavior for the word $w$ approximates the target
law $\nu$.
\end{dfn}

Thus \(\Gamma_m(\nu,\varepsilon)\) consists of those matrix pairs whose spectral
behavior under the word \(w\) is constrained to lie within an
\(\varepsilon\)-neighborhood of the prescribed law \(\nu\).
Geometrically, these sets form thin sectors inside the product sphere
\(\mathscr{S}_m\times \mathscr{S}_m\).

In the absence of constraints, that is, when $(A_m,B_m)$ is sampled from the
full product sphere $\mathscr{S}_m\times \mathscr{S}_m$ with its uniform measure, the joint
$*$--distribution converges almost surely, as $m\to\infty$, to that of a pair
$(a,b)$ of freely independent semicircular elements of variance one. The
limiting operators generate the free group factor $L(\mathbb{F}_2)$, and the associated
noncommutative law represents the typical, or equilibrium, distribution in the
microstate picture.

\subsection{From microstates to monodromy groups}
In this subsection, we etablish the correspondence between random matrix microstates and the algebraic
classification of Belyi maps proceeds in three conceptual steps, which will be used later in the proof of our main statement.

\paragraph{Step 1 (Matrix data $\longleftrightarrow$ Algebra).}
Consider a microstate sector $\Gamma_m(\nu,\varepsilon)$ associated with a
self-adjoint word $w\in\mathbb{C}\langle X,Y\rangle$ and a target spectral
law $\nu$. In the large--$m$ limit, typical sequences $(A_m,B_m)\in
\Gamma_m(\nu,\varepsilon)$ converge in $*$--distribution to a pair of operators
\[
(u,v) \in L(\mathbb{F}_2/N_\nu),
\]
where $L(\mathbb{F}_2/N_\nu)$ is the group von Neumann algebra of a quotient of the free
group $\mathbb{F}_2$ by a normal subgroup $N_\nu$. The spectral law $\nu$ of the
operator $w(u,v)$ is preserved under this limit and serves as the primary
invariant of the sector.

\paragraph{Step 2 (Algebra $\longleftrightarrow$ Group theory).}
The quotient $\mathbb{F}_2/N_\nu$ is a finite group when $\nu$ arises from a
microstate sector corresponding to a Galois Belyi map. In this context, the
quotient coincides precisely with the monodromy group of the covering.

\paragraph{Step 3 (Encoding monodromy via spectra).}
Fixing a spectral law $\nu$ selects a unique normal
subgroup $N_\nu \triangleleft \mathbb{F}_2$, which in turn determines a unique Galois
Belyi map. Its monodromy group is then canonically identified with the quotient
\[
\mathbb{F}_2/N_\nu.
\]
Thus, in this framework, spectral data extracted from matrix words
encodes the full monodromy information of the associated Belyi maps.

\subsection{Geometric Properties}

\begin{lem}
The probability density measure defined in Eq. \ref{E:1} is invariant under unitary conjugation. 
\end{lem}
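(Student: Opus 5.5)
The plan is to verify directly that the measure ${\boldsymbol{\mu}}_m$ of Eq.~\eqref{E:1} is preserved by the map $\Phi_U\colon \cH_m\to\cH_m$, $\Phi_U(M)=UMU^*$, for each fixed $U\in \mathrm{U}(m)$. Concretely, I will show that for every Borel set $E\subset \cH_m$,
\[
{\boldsymbol{\mu}}_m\bigl(\Phi_U^{-1}(E)\bigr)={\boldsymbol{\mu}}_m(E).
\]
The argument splits into two independent facts: invariance of the density $\exp(-\tfrac m2\Tr M^2)$, and invariance of the Lebesgue measure $dM$. The change of variables formula then combines them.

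First, $\Phi_U$ maps $\cH_m$ to itself, since $(UMU^*)^*=UM^*U^*=UMU^*$. It is also $\mathbb{R}$-linear and bijective, with inverse $\Phi_{U^*}$. For the density, cyclicity of the trace gives
\[
\Tr\bigl((UMU^*)^2\bigr)=\Tr(UM^2U^*)=\Tr(M^2),
\]
so $\exp(-\tfrac m2\Tr M^2)$ is constant along conjugation orbits.

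Second, and this is the main point, I will show $\Phi_U$ preserves $dM$. The Lebesgue measure is taken on the $m^2$ independent real coordinates $M_{ii}$, $\operatorname{Re}M_{ij}$, $\operatorname{Im}M_{ij}$ ($i<j$). With respect to the Hilbert--Schmidt inner product $\langle M_1,M_2\rangle=\Tr(M_1M_2)$, these coordinates are orthogonal, with constant rescalings $\|E_{ii}\|^2=1$ and $\|E_{ij}+E_{ji}\|^2=2$. Hence $dM$ is a constant multiple of the Euclidean volume determined by $\langle\cdot,\cdot\rangle$. The map $\Phi_U$ is an isometry for this inner product:
\[
\langle UM_1U^*,UM_2U^*\rangle=\Tr(UM_1M_2U^*)=\langle M_1,M_2\rangle.
\]
So $\Phi_U$ is an orthogonal linear map of the Euclidean space $\cH_m\cong\mathbb{R}^{m^2}$, and $|\det\Phi_U|=1$. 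I expect this step to need the most care: one must make sure the normalization of $dM$ differs from the Hilbert--Schmidt volume only by a constant factor. That constant cancels on both sides, and it is in any case absorbed into $Z_m$.

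Finally, the change of variables $M'=UMU^*$, with Jacobian $1$ and an invariant integrand, gives
\[
\int_{\Phi_U^{-1}(E)}\tfrac{1}{Z_m}e^{-\frac m2\Tr M^2}\,dM=\int_E\tfrac{1}{Z_m}e^{-\frac m2\Tr M'^2}\,dM'.
\]
This is the claimed invariance. As a byproduct, the same isometry argument shows that $\Phi_U$ preserves the sphere $\mathscr{S}_m$ and its uniform measure $\sigma_m$, which is consistent with the earlier description of $\sigma_m$ as the $\mathrm{U}(m)$-invariant measure.
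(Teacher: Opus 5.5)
Your proposal is correct and follows the paper's proof: $\Tr(M^2)$ is invariant by cyclicity of the trace, and $dM$ is invariant because conjugation is an orthogonal linear map of $(\cH_m,\langle\cdot,\cdot\rangle_{\mathrm{HS}})$, so its Jacobian has absolute value $1$. Your extra check that the coordinate Lebesgue measure $dM$ is a constant multiple of the Hilbert--Schmidt volume makes explicit a point that the paper's proof uses without stating.
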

\begin{proof}
The probability density contains the term
\[
\exp\!\left( -\frac{n}{2}\, \mathrm{Tr}(M^{2}) \right).
\]
Under a unitary conjugation \(M' = U M U^\dagger\), we have
\[
\mathrm{Tr}\!\left( (M')^{2} \right)
= 
\mathrm{Tr}\!\left( (U M U^\dagger)(U M U^\dagger) \right)
=
\mathrm{Tr}\!\left( U M^{2} U^\dagger \right)
=
\mathrm{Tr}(M^{2}),
\]
since the trace is cyclic.

Thus, the exponential weight is invariant:
\[
\exp\!\left( -\frac{n}{2}\, \mathrm{Tr}\!\left( (M')^{2} \right) \right)
=
\exp\!\left( -\frac{n}{2}\, \mathrm{Tr}(M^{2}) \right).
\]

Conjugation by a unitary matrix \(U\) defines a linear map
\[
T_{U} : \cH_{n} \to \cH_{n}, 
\qquad 
T_{U}(M) = U M U^{\dagger},
\]
on the real vector space \(\cH_{n}\) of Hermitian matrices.  
This space carries the natural Frobenius (Hilbert--Schmidt) inner product
\[
\langle A , B \rangle = \mathrm{Tr}(AB),
\]
which is real-valued for Hermitian matrices.

Under the transformation \(T_{U}\), one has
\[
\langle T_{U}(A), T_{U}(B) \rangle
=
\mathrm{Tr}\!\left( U A U^{\dagger} \cdot U B U^{\dagger} \right)
=
\mathrm{Tr}\!\left( U A B U^{\dagger} \right)
=
\mathrm{Tr}(AB)
=
\langle A, B \rangle ,
\]
where we used cyclicity of the trace.  
Thus \(T_{U}\) is an orthogonal transformation of the real inner-product space
\(\cH_{n} \cong \mathbb{R}^{n^{2}}\).

Any orthogonal linear transformation has determinant \(\pm 1\), and therefore
preserves Lebesgue measure.  If \(J\) denotes the Jacobian matrix of \(T_{U}\),
then
\[
dM' = |\det J|\, dM = dM.
\]

Hence unitary conjugation preserves the Lebesgue measure on \(\cH_{n}\).

Moreover, the Lebesgue measure \(dM\) on the space of Hermitian matrices
is invariant under the transformation \(M \mapsto U M U^\dagger\).  
Hence,
\[
d\Gamma_{n}(UMU^\dagger)
=
\frac{1}{Z_{n}}
\exp\!\left( -\frac{n}{2}\,\mathrm{Tr}(M^{2}) \right)
\, d(UMU^\dagger)
=
\frac{1}{Z_{n}}
\exp\!\left( -\frac{n}{2}\,\mathrm{Tr}(M^{2}) \right)
\, dM
=
d\Gamma_{n}(M).
\]

Thus, the measure is unitarily invariant:
\[d\Gamma_{n}(UMU^\dagger) = d\Gamma_{n}(M)
\qquad \text{for all unitary } U.\]

Therefore,
\[
\exp\!\left( -\frac{n}{2}\,\mathrm{Tr}\!\left( (M')^{2} \right) \right)
=
\exp\!\left( -\frac{n}{2}\,\mathrm{Tr}(M^{2}) \right).
\]

\end{proof}

\subsection{Free Probability Theory} 
\subsubsection{Noncommutative probability space}
In the algebraic framework of free probability, a \emph{noncommutative probability space} is a pair $(\mathcal{A},\varphi)$, where $\mathcal{A}$ is a unital $*$-algebra over $\mathbb{C}$ and $\varphi:\mathcal{A}\to \mathbb{C}$ is a positive linear functional satisfying $\varphi(1)=1$. The elements of $\mathcal{A}$ are called \emph{noncommutative random variables}.

\, 

\subsubsection{Free independence}
Let $\{\mathcal{A}_i\}_{i\in I}$ be unital subalgebras of $\mathcal{A}$, i.e., $1_\mathcal{A}\in \mathcal{A}_i$ for all $i$. These subalgebras are said to be \emph{free} (or \emph{freely independent}) if, for any $k\ge 1$ and elements $a_j\in \mathcal{A}_{i(j)}$ satisfying $\varphi(a_j)=0$, the mixed moment
\[
\varphi(a_1 \cdots a_k) = 0
\]
whenever consecutive indices differ: $i(1)\neq i(2)\neq \dots \neq i(k)$. Freeness is the noncommutative analogue of classical independence.

\, 
\subsubsection{Semicircularity}
An element $x \in (\mathcal{A},\varphi)$ is called \emph{semicircular} with variance $t>0$ if it is self-adjoint and its moments satisfy
\[
\varphi(x^k) = \int_{-2\sqrt{t}}^{2\sqrt{t}} s^k \, d{\boldsymbol\sigma}_t(s),
\]
where
\[
d{\boldsymbol\sigma}_t(s) = \frac{1}{2\pi t} \sqrt{4t - s^2} \, ds
\]
denotes the semicircular distribution. Semicircular elements play the role of Gaussian variables in free probability.

\subsubsection{Free groups}
Let $\mathbb{F}_n$ denote the free group on $n$ generators, and let $\lambda:\mathbb{F}_n \to B(\ell^2(\mathbb{F}_n))$ denote the left regular representation. The von Neumann algebra generated by $\lambda(\mathbb{F}_n)$, denoted $L(\mathbb{F}_n)$, is a type $\mathrm{II}_1$ factor equipped with its canonical trace $\tau$ defined by $\tau(g) = \delta_{g,e}$. The group generators become unitaries in $L(\mathbb{F}_n)$, and their self-adjoint parts, e.g., $(a+a^{-1})/2$, are semicircular and freely independent.

\begin{remark}[Limiting Behavior and $L(\mathbb{F}_2)$]
The factor $L(\mathbb{F}_2)$ arises naturally as the limiting object for two independent large random Hermitian matrices. By Voiculescu's asymptotic freeness theorem, if $A_m$ and $B_m$ are independent Hermitian matrices from, e.g., the spherical ensemble $\mathscr{S}_m$, then the pair $(A_m,B_m)$ converges in distribution, as $m\to\infty$, to a pair $(a,b)$ of freely independent semicircular elements. The von Neumann algebra generated by $a$ and $b$ is isomorphic to $L(\mathbb{F}_2)$, and any noncommutative polynomial $w(a,b)$ lies naturally in $L(\mathbb{F}_2)$, with its spectral distribution determined by the trace $\tau$.
\end{remark}

\subsection{How each sector corresponds to a quotient}

Let $\nu$ be a spectral law arising from a microstate sector $\Gamma(\nu,\varepsilon)$. Our main theorem identifies a unique nontrivial normal subgroup 
$N_\nu \triangleleft \mathbb{F}_2$,
 
and the limiting operators $(u,v)$ lie in the group von Neumann algebra $L(\mathbb{F}_2/N_\nu)$.

The crucial map is the canonical quotient
\[
\pi_\nu: \mathbb{F}_2 \to \mathbb{F}_2/N_\nu,
\] 
which extends linearly to the group algebra and further to the von Neumann algebra:
\[
\pi_\nu : \mathbb{C}[\mathbb{F}_2] \to \mathbb{C}[\mathbb{F}_2/N_\nu], \qquad 
\pi_\nu : L(\mathbb{F}_2) \to L(\mathbb{F}_2/N_\nu).
\]

Any word $w \in \mathbb{C}\langle X,Y\rangle$ is first interpreted in $L(\mathbb{F}_2)$ via the free semicircular system $(a,b)$:
\[
w \mapsto w(a,b).
\]
However, in the sector corresponding to $\nu$, the limit of the matrix word is not $w(a,b)$ but rather its image under the quotient:
\[
\lim_{m\to\infty} w(A_m,B_m) = w(u,v) = \pi_\nu(w(a,b)) \in L(\mathbb{F}_2/N_\nu).
\]
The spectral measure of $w(u,v)$ is exactly $\nu$, which generally differs from the free law $\mu_w$ because the relations in $N_\nu$ alter the operator's spectrum.

\subsubsection{Interpretation}
Each sector $\Gamma(\nu,\varepsilon)$ on the infinite sphere $\mathscr{S}_\infty$ is a geometric locus for all Belyi maps whose Galois closure has monodromy group $\mathbb{F}_2/N_\nu$. The sector organizes covers by their shared maximal Galois piece.
    
Within a sector, each word evaluates via the quotient map $\pi_\nu$. The relations in $N_\nu$ (e.g., $r(a,b)=I$) constrain the operator algebra, yielding the specific spectral law $\nu$.
    
Concerning, probabilistic (large deviations): the probability of landing in sector $\Gamma_m(\nu,\varepsilon)$ satisfies
    \[
    \mathbb{P}[\Gamma_m(\nu,\varepsilon)] \sim \exp(-m^2 I(\nu)),
    \]
    where the rate function
    \[
    I(\nu) = \chi(a,b) - \chi(\nu)
    \]
    measures the free-entropy deficit relative to the free law.

We summarize this in the following table. 

\begin{center}
\begin{tabular}{|c|c|c|}
\hline
\textbf{Spectral} & \textbf{Algebraic Object} & \textbf{Belyi Map} \\
\textbf{/Geometric Object} &&\textbf{Interpretation}\\
\hline
Full sphere  & Free group factor & Moduli space of all covers \\
$\mathscr{S}_\infty$&  $L(\mathbb{F}_2)$ & \\
Sector $\Gamma(\nu,\varepsilon)$ & Quotient algebra $L(\mathbb{F}_2/N_\nu)$ & Family of covers with Galois closure $\mathbb{F}_2/N_\nu$ \\
Spectral law $\nu$ & Kernel $N_\nu$ of $\pi_\nu$ & Invariant encoding \\
&&defining relations of Galois closure \\

\hline
\end{tabular}
\end{center}

\section{Spherical Ensembles and Microstate Sectors}
\label{sec:spherical}
\subsection{Finite-Dimensional Spherical Ensembles}

Let $\cH_m$ denote the real vector space of $m\times m$ complex Hermitian matrices. Equip $\cH_m$ with the Hilbert--Schmidt inner product and norm:
\[
\langle A, B \rangle_{\mathrm{HS}} = \mathrm{Tr}(AB), 
\qquad
\|A\|_{\mathrm{HS}}^2 = \mathrm{Tr}(A^2).
\]
The \emph{matrix sphere} is
\[
\mathscr{S}_m := \Big\{ A \in \cH_m : \frac{1}{m} \mathrm{Tr}(A^2) = 1 \Big\}.
\]
Equivalently, $\mathscr{S}_m$ is the Hilbert--Schmidt sphere of radius $\sqrt{m}$ in $\cH_m$, a smooth compact manifold of real dimension $m^2 - 1$.

Let $(\mathscr{S}_m,\sigma_m)$ be the sphere equipped with its normalized Haar (uniform) measure. 
Then, $\sigma_m$ coincides with the conditional distribution of a GUE matrix given the event
\[
\frac{1}{m}\mathrm{Tr}(G_m^2) = 1.
\]

\begin{remark}[Unitary Invariance and Uniform Measure]
The unitary group $U(m)$ acts on $\mathscr{S}_m$ by conjugation:
\[
A \mapsto U A U^*, \quad U \in U(m),
\]
preserving the sphere. The unique Borel probability measure on $\mathscr{S}_m$ invariant under this action is denoted $\sigma_m$ (the \emph{normalized Haar measure} or uniform distribution). It can also be obtained as:
\begin{enumerate}
    \item the normalized surface (area) measure induced by $H_m$; 
    \item the conditional law of a GUE matrix restricted to $\mathscr{S}_m$.
\end{enumerate}
\end{remark}

Let $A_m,B_m \in \mathscr{S}_m$ be independent random matrices sampled from $\sigma_m$. Then:
\begin{enumerate}
    \item \emph{Individual Spectral Law:} The empirical spectral distribution
    \[
    \mu_{A_m} := \frac{1}{m} \sum_{i=1}^m \delta_{\lambda_i(A_m)}
    \]
    converges weakly almost surely to the Wigner semicircular law
    \[
    d{\boldsymbol\sigma}(t) = \frac{1}{2\pi} \sqrt{4-t^2}\, dt, \qquad t \in [-2,2].
    \]
    
    \item \emph{Joint Unitary Invariance:} For any $U \in U(m)$,
    \[
    (U A_m U^*, U B_m U^*) \stackrel{d}{=} (A_m, B_m).
    \]
    
    \item \emph{Asymptotic Freeness:} As $m \to \infty$, $(A_m,B_m)$ converges in joint $*$-distribution to a pair $(a,b)$ of freely independent semicircular operators.
\end{enumerate}

\subsection{Noncommutative Distributions and Words}\label{S:Voiculescu}

Let $A_n,B_n\in \cH_n$ be random matrices. The normalized trace of a matrix $M \in \cH_n$ is
\[
\tau_n(M) := \frac{1}{n} \mathrm{Tr}(M).
\]

The \emph{joint *-distribution} of a pair $(A_n, B_n)$ is the collection of all normalized *-moments
\[
\tau_n\big(M_{i_1}^{\epsilon_1} M_{i_2}^{\epsilon_2} \cdots M_{i_k}^{\epsilon_k}\big),
\]
where $M_{i_j} \in \{A_n, B_n\}$ and $\epsilon_j \in \{1, *\}$ (with $M^1 = M$ and $M^* = $ conjugate transpose). This data completely characterizes the non-commutative probability distribution with respect to $\tau_n$.

For a non-commutative polynomial (word) $w \in \mathbb{C}\langle X,Y \rangle$, the evaluation $w(A_n, B_n)$ is obtained by substituting $X \mapsto A_n$, $Y \mapsto B_n$, respecting the non-commutative product order. If $w$ is self-adjoint (i.e., $w = w^*$ under $X^*=X, Y^*=Y$), then $w(A_n, B_n)$ is Hermitian.

For a self-adjoint word $w$, the \emph{empirical spectral measure} of $w(A_n, B_n)$ is
\[
\mu_{w(A_n,B_n)} := \frac{1}{n} \sum_{i=1}^n \delta_{\lambda_i},
\]
where $\lambda_1,\dots,\lambda_n$ are the eigenvalues of $w(A_n, B_n)$. This measure describes the eigenvalue distribution of the matrix word.

By Voiculescu, we have that: 
\begin{prop}let $(A_n, B_n)$ be a sequence of independent random matrix pairs drawn from a unitarily invariant ensemble such as the spherical ensemble $\mathscr{S}_n \times \mathscr{S}_n$. Then, almost surely as $n \to \infty$:
\begin{enumerate}
    \item \emph{Joint Convergence:} $(A_n, B_n)$ converges in joint *-distribution to a pair $(a,b)$ of freely independent semicircular operators of variance 1.
    \item \emph{Spectral Convergence:} For any self-adjoint word $w \in \mathbb{C}\langle X,Y\rangle$, the empirical spectral measure $\mu_{w(A_n,B_n)}$ converges weakly to a deterministic probability measure $\mu_w$ on $\mathbb{R}$, which is the spectral measure of $w(a,b)$ in the von Neumann algebra generated by $(a,b)$ (isomorphic to $L(\mathbb{F}_2)$).
    \item \emph{Moment Convergence:} For all $k \in \mathbb{N}$,
    \[
    \tau_n\big(w(A_n,B_n)^k\big) \longrightarrow \tau\big(w(a,b)^k\big) = \int_\mathbb{R} t^k \, d\mu_w(t),
    \]
    where $\tau$ is the canonical trace on $L(\mathbb{F}_2)$.
\end{enumerate}

\end{prop}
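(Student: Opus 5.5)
The plan is to reduce all three statements to Voiculescu's asymptotic freeness theorem for unitarily invariant ensembles, which we take in its strong almost-sure form for GUE-type matrices, and then transfer it to the spherical ensemble. Two features of the spherical ensemble make this transfer possible. First, $\sigma_n$ is the law of $G_n/\sqrt{\tau_n(G_n^2)}$, where $G_n$ is a GUE matrix normalized as in Eq.~\eqref{E:1}. Second, $\tau_n(G_n^2)\to 1$ almost surely, with Gaussian concentration of order $e^{-cn^2}$. We realize $A_n = G_n/\sqrt{\tau_n(G_n^2)}$ and $B_n = G'_n/\sqrt{\tau_n(G_n'^2)}$ with $G_n, G'_n$ independent GUE matrices. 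Every mixed moment $\tau_n(A_n^{\epsilon_1}\cdots)$ is then the corresponding GUE moment multiplied by a scalar factor that tends to $1$ almost surely, and $*$ acts trivially because the matrices are Hermitian.

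\textbf{Step 1 (joint convergence).} For independent GUE matrices, Voiculescu's theorem says that $\tau_n(P(G_n,G'_n))\to\tau(P(a,b))$ for each noncommutative monomial $P$. The almost-sure (rather than in-expectation) version follows from the standard variance bound $\mathrm{Var}\,\tau_n(P)=O(n^{-2})$, which is summable, together with Borel--Cantelli. Multiplying by the scalar normalization factors, and using that there are only countably many monomials, gives almost-sure convergence in joint $*$-distribution of $(A_n,B_n)$ to a free semicircular pair $(a,b)$ of variance $1$. The variance is $1$ precisely because $\tau_n(A_n^2)=1$ identically.

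\textbf{Step 3 (moments), then Step 2 (spectral measure).} Part (3) is immediate from Step 1: $w(A_n,B_n)^k$ is again a noncommutative polynomial, and $\tau(w(a,b)^k)=\int t^k\,d\mu_w$ by the spectral theorem in the tracial von Neumann algebra $W^*(a,b)\cong L(\mathbb{F}_2)$. For part (2), define $\mu_w$ as the spectral measure of $w(a,b)$. Since $w(a,b)$ is bounded, $\mu_w$ has compact support and is determined by its moments. Moment convergence for all $k$ then implies weak convergence of $\mu_{w(A_n,B_n)}$, because convergence of moments to those of a moment-determinate limit forces weak convergence.

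\textbf{Main obstacle.} The delicate point is the almost-sure qualifier together with the unboundedness of moments at finite $n$. Moment convergence implies weak convergence only after one checks tightness, which here is automatic since the limiting moments exist and grow at most geometrically. The almost-sure concentration for each fixed polynomial must be justified for the spherical law itself. I would obtain it from concentration of measure on $\mathscr{S}_n$: the map $(A,B)\mapsto \tau_n(P(A,B))$ is Lipschitz on the HS-ball of radius $\sqrt n$, up to a cutoff on the operator norm. This gives deviation probabilities of order $e^{-cn^2\delta^2}$, which are summable. The operator-norm cutoff is controlled by the almost-sure bound $\limsup\|G_n\|\le 2$, and handling this truncation is where I expect the main technical work to lie.
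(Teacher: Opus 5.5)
Your proposal is correct and takes essentially the same approach as the paper, whose proof only invokes Voiculescu's asymptotic freeness theorem; you additionally make explicit what the paper leaves implicit, namely the transfer to the spherical ensemble via $A_n = G_n/\sqrt{\tau_n(G_n^2)}$, the $O(n^{-2})$ variance bound with Borel--Cantelli, and moment determinacy of the compactly supported $\mu_w$. The GUE representation already gives summable deviation bounds for the law of each $(A_n,B_n)$, so Borel--Cantelli yields the almost-sure statement for any coupling across $n$, and the separate concentration-with-truncation argument on $\mathscr{S}_n$ that you flag as the main obstacle is not actually needed.
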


\begin{proof}
    This follows from the construction given in \cite{V} and \cite{AG}.
\end{proof}

The measure $\mu_w$ is called the \emph{free law} of the word $w$ in two free semicirculars. It is the non-commutative analogue of the distribution of a polynomial in two independent Gaussian random variables.
\medskip

\subsection{The Infinite-Dimensional Sphere} 

As $n$ tends to infinity, the dimension of the sphere $\sc\mathscr{S}_n$ grows and converges in a suitable sense to an infinite-dimensional sphere $\sc\mathscr{S}_{\infty}$ in the Hilbert space of operators. The joint non-commutative distribution of $(A_n,B_n)$ converges to that of the free semicircular pair $(a,b)$. The algebra  \(L(\mathbb{F}_2)\) is viewed therefore as the non-commutative function algebra on an infinite-dimensional  sphere.

\medskip

\subsubsection{Construction as a Projective Limit}
For each $n \in \mathbb{N}$, let $(\mathscr{S}_n, \sigma_n)$ denote the $n$-dimensional matrix sphere equipped with the uniform Haar measure. For $m > n$, define the embedding
\[
\iota_{m,n} : \mathscr{S}_n \hookrightarrow \mathscr{S}_m, \qquad 
\iota_{m,n}(A) := \sqrt{\frac{n}{m}} 
\begin{pmatrix}
A & 0 \\[2mm]
0 & 0
\end{pmatrix}.
\]
This map embeds a smaller matrix into the upper-left corner of a larger one and rescales it so that the Hilbert--Schmidt norm condition $(1/m)\mathrm{Tr}(\cdot^2)=1$ is satisfied. The embeddings are compatible in the sense that
\[
\iota_{\ell,m} \circ \iota_{m,n} = \iota_{\ell,n}, \quad \text{for all } n < m < \ell.
\]

\begin{dfn}[Infinite-Dimensional Sphere]
The \emph{infinite-dimensional sphere} is defined as the projective limit
\[
\mathscr{S}_\infty := \varprojlim (\mathscr{S}_n, \iota_{m,n}) = 
\Big\{ (A_1, A_2, \dots) \in \prod_{n=1}^\infty \mathscr{S}_n \;\Big|\; \iota_{m,n}(A_n) = A_m \text{ for all } m>n \Big\}.
\]
\end{dfn}

A point $A_\infty \in \mathscr{S}_\infty$ is a coherent sequence of matrices of increasing size, where each finite section is determined by its predecessors.

The uniform measures $\sigma_n$ on the finite-dimensional spheres induce a natural measure $\sigma_\infty$ on $\mathscr{S}_\infty$:

\begin{remark}
The embeddings $\iota_{m,n}$ are measure-preserving. Indeed, $(\iota_{m,n})_* \sigma_n$ equals the restriction of $\sigma_m$ to the image of $\mathscr{S}_n$. Hence, $\{\sigma_n\}_{n \in \mathbb{N}}$ forms a projective system of probability measures.
    
 By the Kolmogorov extension theorem, there exists a unique probability measure $\sigma_\infty$ on the Borel $\sigma$-algebra of $\mathscr{S}_\infty$ such that, for every $n \in \mathbb{N}$, the canonical projection $\pi_n: \mathscr{S}_\infty \to \mathscr{S}_n$ satisfies
    \[
    (\pi_n)_* \sigma_\infty = \sigma_n.
    \]
    This measure $\sigma_\infty$ is the \emph{projective-limit Haar measure}, providing a natural uniform distribution on $\mathscr{S}_\infty$.
\end{remark}

\subsubsection{Interpretation as a Space of Noncommutative Random Variables}

The space $(\mathscr{S}_\infty, \sigma_\infty)$ serves as canonical geometric framework for the theory of large random matrices.

\begin{itemize}
    \item A point $A_\infty = (A_1, A_2, \dots) \in \mathscr{S}_\infty$ represents a \emph{microstate sequence}, i.e., a coherent approximation of an infinite-dimensional object by finite matrices.
    
    \item Note that not every sequence yields a good noncommutative limit. However, fundamental results in free probability (e.g., asymptotic freeness) assert that $\sigma_\infty$-almost every coherent sequence $(A_n, B_n) \in \mathscr{S}_\infty \times \mathscr{S}_\infty$ has a joint *-distribution converging to that of a pair $(a,b)$ of freely independent semicircular operators.
    
    \item For a word $w \in \mathbb{C}\langle X, Y\rangle$, define
    \[
    w(A_\infty, B_\infty) := \big(w(A_n, B_n)\big)_{n\in \mathbb{N}}.
    \]
    For $\sigma_\infty$-almost every point, the empirical spectral measure of $w(A_n, B_n)$ converges weakly to the deterministic measure $\mu_w$, the free law of $w(a,b)$. Hence, $\mathscr{S}_\infty$ provides a universal source of microstate approximations for elements of the free group factor $L(\mathbb{F}_2)$.
\end{itemize}

%%%%%%%%%%%%

\subsection{Sectors and Algebraic Quotients}
Let $(A_n, B_n)$ be a sequence of matrices contained in the microstate sector $\Gamma_n(\nu,\varepsilon)$, so that the eigenvalue distribution of any matrix word $w(A_n,B_n)$ remains $\varepsilon$-close to an atypical law $\nu$.

By compactness arguments (e.g., via subsequences), the sequence $(A_n,B_n)$ converges in joint *-distribution:
\[
(A_n,B_n) \longrightarrow (u,v) \in (M,\tau),
\]
where $(M,\tau)$ is a tracial von Neumann algebra. The limiting operators satisfy
\[
\mathrm{Law}_\tau\big(w(u,v)\big) = \nu \neq \mu_w,
\]
which imposes nontrivial algebraic constraints on $(u,v)$.

\subsubsection{Emergence of Quotient Algebras}

The deviation $\nu \neq \mu_w$ obstructs $(u,v)$ from being freely independent semicirculars. The free entropy deficit
\[
I(\nu) = \chi(a,b) - \chi(\nu) > 0
\]
quantifies this difference. In the microstate formalism, lower entropy corresponds to confinement in an exponentially small subset of the matrix sphere. In the large-$n$ limit, this confinement manifests as algebraic relations among $(u,v)$; otherwise, the operators would be free and attain maximal entropy.

\medskip

Let $N \triangleleft \mathbb{F}_2$ denote the normal subgroup generated by all words corresponding to algebraic relations satisfied by $(u,v)$, i.e.,
\[
p(u,v) = I \quad \text{for some noncommutative polynomial } p.
\]
Then $(u,v)$ generate a von Neumann algebra isomorphic to
\[
L(\mathbb{F}_2/N),
\]
with trace $\tau$ induced from the limit of normalized matrix traces.

\subsubsection{Evaluation of Words via the Quotient}

The canonical quotient homomorphism
\[
\pi_N : L(\mathbb{F}_2) \longrightarrow L(\mathbb{F}_2/N), \qquad \pi_N(a) = u, \; \pi_N(b) = v
\]
induces a consistent rule for evaluating words in the constrained sector:
\[
w(u,v) = \pi_N\big(w(a,b)\big).
\]
The spectral law of $w(u,v)$ coincides with $\nu$, reflecting the effect of modding out the normal subgroup $N$.

\subsubsection{Words in $\mathbb{F}_2$ and their counterparts in $L(\mathbb{F}_2)$} 
The group algebra $\C[\mathbb{F}_2]$ is defined as the set of all finite formal sums of the form:
$\sum_{g \in \mathbb{F}_2} \lambda_g \, g,$ where each  coefficient $\lambda_g$  is a complex number. Only finitely many of these coefficients are non-zero. The symbol g denotes and element of $\mathbb{F}_2$.
An element $g\in \mathbb{F}_2$ corresponds to $1\cdot g\in \C[\mathbb{F}_2]$, thus we have natural inclusion $\mathbb{F}_2\hookrightarrow \C[\mathbb{F}_2]$. 

\, 

Now, the von Neumann $L(\mathbb{F}_2)$ is a completion of $\C[\mathbb{F}_2]$ with a norm and a trace. Algebraic relations from $\mathbb{F}_2$ become operator equations in $L(\mathbb{F}_2)$.

The relationship between the free group \(\mathbb{F}_2\) and its group von Neumann 
algebra \(L(\mathbb{F}_2)\) is given by the left regular representation
\[
\lambda : \mathbb{F}_2 \to B\bigl(\ell^2(\mathbb{F}_2)\bigr),
\]
where $B\bigl(\ell^2(\mathbb{F}_2)\bigr)$ is the algebra of all bounded linear operators on the Hilbert space
\(\ell^2(\mathbb{F}_2)\). This Hilbert space consists of square-summable functions on the free group
\(\mathbb{F}_2\) and admits a canonical orthonormal basis
\(\{\delta_g\}_{g\in \mathbb{F}_2}\) indexed by the elements of the group.
A typical vector \(\psi\in\ell^2(\mathbb{F}_2)\) can be written as
\[
\psi=\sum_{g\in \mathbb{F}_2} \psi_g\,\delta_g,
\qquad
\sum_{g\in \mathbb{F}_2} |\psi_g|^2 < \infty.
\]

Under this correspondence, subgroups of \(\mathbb{F}_2\) are in bijective relation with 
von Neumann subalgebras of \(L(\mathbb{F}_2)\) generated by the subgroup. 

Let \(H \triangleleft \mathbb{F}_2\) be a subgroup with inclusion \(i : H \hookrightarrow \mathbb{F}_2\). 
Denote by \(\lambda|_H\) the restriction of the left regular representation 
$\lambda$ of $\mathbb{F}_2$ to $H$. The von Neumann subalgebra
\[
L(H) \subset L(\mathbb{F}_2)
\]
is then defined as the one generated by \(\lambda(H)\), as can be illustrated by the following commutative diagram:
\[
\begin{tikzcd}[column sep=large, row sep=large]
H \arrow[r, hook, "i"] \arrow[d, "\lambda|_H"'] & 
\mathbb{F}_2 \arrow[d, "\lambda"] \\
L(H) \arrow[r, hook] & 
L(\mathbb{F}_2).
\end{tikzcd}
\]

A third related object interesting for us  is the free unital algebra $\C\langle X, Y\rangle$ (the algebra of noncommutative polynomials).  There exists an $*$-injective algebra homomorphism from  $\C\langle X, Y\rangle$ into the group algebra $\C[\mathbb{F}_2]$ (note that the notation $``*$'' means that the homomorphism respects the involution). This embedding identifies $\C\langle X, Y\rangle$ with a specific subalgebra of $\C[\mathbb{F}_2]$. The injection is done via $X \mapsto a+a^{-1}$ and $Y\mapsto b+b^{-1}$. 

\subsubsection{Connection with Belyi Monodromy}

A Belyi map $f : X \to \mathbb{CP}^1$ (a covering ramified over three points) is classified, up to isomorphism, by its transitive monodromy subgroup of a symmetric group. This subgroup corresponds canonically to a finite-index normal subgroup $N \triangleleft \mathbb{F}_2$, where $\mathbb{F}_2$ is the fundamental group of $\mathbb{CP}^1$ minus the branch points.

\medskip

Consequently, we obtain the bijective correspondence
\[
\text{Spectral sector } \nu \;\longleftrightarrow\; N \;\longleftrightarrow\; \text{Galois Belyi map with monodromy } \mathbb{F}_2/N.
\]

\subsubsection{Refinement of the Moduli Space}

The infinite-dimensional matrix sphere $\mathscr{S}_\infty$, foliated into spectral sectors $\Gamma(\nu)$, acquires the structure of a geometric moduli space for Belyi maps. Each sector forms a stratum of maps sharing the same Galois closure (monodromy group $\mathbb{F}_2/N$), while the continuous parameter $\nu$ indexes deformations of the complex structure or embeddings of the monodromy group within this locus.

\section{Main Statement and Proof}\label{sec:main}

\begin{thm}[Spectral Sectors and Algebraic Quotients]\label{T:1}
Let $(A_n,B_n)$ be the uniform spherical ensemble on $\mathscr{S}_n \times \mathscr{S}_n$.
Fix a self-adjoint word $w \in \mathbb{C}\langle X,Y\rangle$ and a probability
measure $\nu \neq \mu_w$ on $\mathbb{R}$.
For $\varepsilon>0$, consider the microstate sector
\[
\Gamma_n(\nu,\varepsilon)
=
\Bigl\{(A_n,B_n) \;:\;
d\bigl(\mu_w(A_n,B_n),\nu\bigr)<\varepsilon
\Bigr\}.
\]
Assume that $\Gamma_n(\nu,\varepsilon)$ is nonempty for all sufficiently large $N$.

Then there exists a nontrivial normal subgroup $N \triangleleft \mathbb{F}_2$
such that, for almost every sequence
$(A_n,B_n)\in \Gamma_n(\nu,\varepsilon)$, the following hold as $n\to\infty$:
\begin{enumerate}
    \item The joint $*$-distribution of $(A_n,B_n)$ converges to that of a pair
    $(u,v)$ in the group von Neumann algebra $L(\mathbb{F}_2/N)$.
    \item The spectral measure of $w(u,v)$ is exactly $\nu$.
    \item In $L(\mathbb{F}_2/N)$, the operator $w(u,v)$ satisfies the relations
    induced by the quotient map $\mathbb{F}_2 \to \mathbb{F}_2/N$.
\end{enumerate}
In particular, if $\nu \neq \mu_w$, then $w(u,v)$ is not the image of the free word
$w(a,b)\in L(\mathbb{F}_2)$, but rather the image of $w$ in the quotient algebra
$L(\mathbb{F}_2/N)$.
\end{thm}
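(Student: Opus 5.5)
The plan has four steps: extract a limit object from the sector, build a group homomorphism from $\mathbb{F}_2$ into its unitary group, take $N$ to be the kernel, and use $\nu\neq\mu_w$ to force $N\neq\{e\}$.

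\textbf{Step 1: extracting a limit.} On $\mathscr{S}_n$ we only control $\frac{1}{n}\Tr(A_n^2)=1$, not operator norms, so I first truncate. Conditioned on $\Gamma_n(\nu,\varepsilon)$, the probability that $\|A_n\|$ or $\|B_n\|$ exceeds a large constant $R$ is super-exponentially small on the scale $e^{-n^2 I(\nu)}$. This is the large-deviation heuristic recorded before the theorem and would have to be justified. I can therefore restrict to pairs with $\|A_n\|,\|B_n\|\le R$.

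\textbf{Step 2: the tracial limit.} For such pairs, the normalized moments $\tau_n(P(A_n,B_n))$ are uniformly bounded for each $P\in\mathbb{C}\langle X,Y\rangle$. A diagonal subsequence argument gives convergence of all joint $*$-moments to a tracial state $\tau$ on $\mathbb{C}\langle X,Y\rangle$. The GNS construction then produces a tracial von Neumann algebra $(M,\tau)$ generated by self-adjoint $u,v$ with $\|u\|,\|v\|\le R$.

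Since $d(\mu_w(A_n,B_n),\nu)<\varepsilon$ and weak convergence is equivalent to moment convergence on $[-C,C]$ for uniformly bounded spectra, the spectral law of $w(u,v)$ is $\varepsilon$-close to $\nu$. Letting $\varepsilon\downarrow0$ along a diagonal sequence gives item (2) exactly.

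To obtain "almost every sequence" rather than a subsequence, I would use concentration of measure on $\mathscr{S}_n\times\mathscr{S}_n$. This is L\'evy's inequality on a sphere of dimension $n^2-1$, which makes each moment functional concentrate around its conditional mean. I would combine it with Borel--Cantelli, which requires the limiting law inside the sector to be unique, for instance a unique minimizer of the rate function.

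\textbf{Step 3: the group and the normal subgroup.} Define unitaries $U=e^{iu}$ and $V=e^{iv}$ in $M$. Freeness of $\mathbb{F}_2$ gives a homomorphism $\rho:\mathbb{F}_2\to\mathcal{U}(M)$ with $\rho(\gamma_0)=U$ and $\rho(\gamma_1)=V$. Set $N:=\ker\rho$. This is automatically normal, which settles the group-theoretic part of items (1) and (3). The induced map $\bar\rho:\mathbb{F}_2/N\to\mathcal{U}(M)$ then extends linearly to $\mathbb{C}[\mathbb{F}_2/N]$, and $w(u,v)$ is expressed through it via functional calculus.

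\textbf{Step 4: nontriviality of $N$.} Suppose $N=\{e\}$. I would aim to show that then $u,v$ are free semicirculars, so that the law of $w(u,v)$ is $\mu_w$, contradicting $\nu\neq\mu_w$. The planned route is that the free entropy $\chi(u,v)$ is finite, by the lower bound on $\sigma_n(\Gamma_n)$ implied by nonemptiness and the LDP, and that a relation-free pair maximizing entropy must be the free semicircular one.

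\textbf{Main obstacle.} The hardest step is identifying $W^*(u,v)$ with $L(\mathbb{F}_2/N)$. This requires the trace to satisfy $\tau(\bar\rho(g))=\delta_{g,e}$ for $g\in\mathbb{F}_2/N$, meaning $\bar\rho$ is the left regular representation. That does not follow from $N=\ker\rho$ alone: a faithful representation of $\mathbb{F}_2$ with a non-canonical trace already breaks it. Step 4 is similarly delicate, since "no group relations" does not by itself imply freeness.

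My first attempt at both would be to replace $(u,v)$ by the entropy-maximizing law within the sector. I would then argue, through a variational (Gibbs) characterization, that the maximizer is the canonical trace on its group image. I expect this to need additional hypotheses on $\nu$, for instance that $\nu$ is itself realized by some $L(G)$ with $G$ a quotient of $\mathbb{F}_2$. Item (3) and the final sentence of the theorem then follow formally from $w(u,v)=\pi_N(w(a,b))$ once this identification is in place.
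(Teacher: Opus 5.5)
You have located the right obstacles, but the proposal does not prove the statement, and the obstacles are not technical: the steps fail. The paper's proof has the same outline (compactness limit, a normal subgroup read off from the relations of $(u,v)$, then $\pi_N$) and gives you nothing to borrow here. Its Step 4 ``generates'' $N$ from the operators $r(a,b)$ with $r(u,v)=0$, which are not elements of $\mathbb{F}_2$, and then asserts $W^*(u,v)\cong L(\mathbb{F}_2/N)$ without argument.

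The implication your Step 4 needs, that $\ker\rho=\{e\}$ forces $(u,v)$ to be free semicirculars, is false. Take $w=X$ and $\nu$ diffuse but not semicircular, say uniform on $[-\sqrt3,\sqrt3]$. The pairs $(W_nD_nW_n^*,B_n)$, with $D_n$ diagonal approximating $\nu$ and $W_n$ Haar, lie in $\Gamma_n(\nu,\varepsilon)$. They converge to a free pair $(u,v)$ with $u\sim\nu$ and $v$ semicircular. This is also the generic behaviour. The sector constrains only the spectrum of $A_n$, so conditionally $A_n$ stays unitarily invariant and independent of $B_n$, and asymptotic freeness again yields free limits.

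For such a pair every $e^{iku}$ and $e^{ilv}$ with $k,l\neq0$ is non-scalar. So for a nontrivial reduced word in $U=e^{iu}$, $V=e^{iv}$, the product of the centered syllables is its nonzero top-length component in the free-product decomposition of $L^2$. Hence $\ker\rho=\{e\}$ although $\nu\neq\mu_w$.

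The entropy route points the same way. Since $\chi(u,v)\le\chi(u)+\chi(v)$ with equality for free pairs, once the law of $u$ is prescribed the entropy is maximized by a free, relation-free pair. Nor is $\chi(u,v)$ finite in general: for $w=X$ and atomic $\nu$, your $\varepsilon\downarrow0$ limit has $\chi(u)=-\infty$.

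Moreover, $\ker\rho$ is not an invariant of the limit. For a symmetry $u\neq\pm1$, $e^{iu}$ satisfies no power relation, while $U=e^{i\pi u/2}$ satisfies $U^4=1$.

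Finally, item (3) is not a formality. $L(\mathbb{F}_2)$ is a II$_1$ factor, hence simple. So for $N\neq\{e\}$ no unital $*$-homomorphism $\pi_N\colon L(\mathbb{F}_2)\to L(\mathbb{F}_2/N)$ extends the group quotient, because it would kill $\lambda(g)-1\neq0$ for $g\in N\setminus\{e\}$. The expression $w(u,v)=\pi_N(w(a,b))$ therefore has no meaning.

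Item (2) fails as well. Your diagonal $\varepsilon\downarrow0$ hides this only by changing the theorem, in which $\varepsilon$ is fixed. Choose $\nu\neq\mu_w$ with $d(\nu,\mu_w)<\varepsilon$. By asymptotic freeness, almost every sequence lies in $\Gamma_n(\nu,\varepsilon)$ for all large $n$ and converges to the free semicircular pair, whose $w$-law is $\mu_w\neq\nu$.

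The paper's Step 2 misses this because it uses $I(\nu)$. The LDP upper bound only gives $\inf_{\overline{B_\varepsilon(\nu)}}I$, which is $0$ here. When instead $\mu_w\notin\overline{B_\varepsilon(\nu)}$, conditioned laws concentrate on rate minimizers over the closed ball, typically on its boundary rather than at $\nu$.

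Your Step 1 truncation is also unjustified. On $\mathscr{S}_n$, a single outlier eigenvalue already gives $\|A_n\|>R$ (for $R>2$) with probability of order $e^{-cn}$, which dwarfs $e^{-n^2I}$. So nothing follows from comparison with the sector's probability; you would need a bound under the conditioned measure itself.

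So the statement as written cannot be proved. Your closing remark that extra hypotheses are needed is right. But the examples above have free, relation-free limits, so what fails is the claimed mechanism (that conditioning on a spectral constraint creates group relations), not merely this route to it.
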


\paragraph{Role in the large-deviation theorem.}
The distinction between \(\mu_w\) and \(\nu\neq\mu_w\) underlies the structure of
the large-deviation principle.

\begin{itemize}
    \item \emph{Typical behavior.}
    The sector corresponding to \(\nu=\mu_w\) carries asymptotically full volume.
    In this regime the matrices are asymptotically free, and
    \(w(A_n,B_n)\) converges in law to \(w(a,b)\).

    \item \emph{Atypical behavior.}
    For \(\nu\neq\mu_w\), the sector \(\Gamma_n(\nu,\varepsilon)\) is exponentially
    small. Conditioning on this event forces nontrivial correlations between
    \(A_n\) and \(B_n\), which persist in the limit as algebraic relations.
    Consequently, the limiting operators generate a quotient algebra
    \(L(\mathbb{F}_2/N)\) rather than the free group factor \(L(\mathbb{F}_2)\), and the word \(w\)
    has spectral law exactly \(\nu\).
\end{itemize}

In this sense, \(\mu_w\) represents the equilibrium spectral measure of the
spherical ensemble, while the microstate sectors
\(\Gamma_n(\nu,\varepsilon)\) encode large deviations from equilibrium whose
probabilistic cost is quantified by the rate function \(I_w\).

\begin{proof}
The proof of our statement relies on the previous sections. 

\, 

\textbf{Step 1: Setup and Large Deviation Principle.}  
Consider the probability space $(\Omega, \mathcal{F}, P)$ where
\[
\Omega = \varprojlim (\mathscr{S}_n \times \mathscr{S}_n)
\]
is the projective limit of the spherical ensembles, equipped with the limit measure
\[
P = \varprojlim (\sigma_n \otimes \sigma_n).
\]
A point $\omega \in \Omega$ is a coherent sequence
\[
\omega = \big((A_1,B_1),(A_2,B_2),\dots\big).
\]

Let $w$ be a noncommutative word, and let $\mu_w$ denote the limiting spectral law of $w(A_n,B_n)$ for a generic sequence, which by Voiculescu's asymptotic freeness theorem equals the law of $w(a,b)$, where $a,b$ are free semicircular generators of $L(\mathbb{F}_2)$.

The large deviation principle (Ben Arous–Guionnet \cite{AG}) states that for any closed set $K$ of probability measures,
\[
\limsup_{N \to \infty} \frac{1}{N^2} \log P\big( \mu_w(A_n,B_n) \in K \big) \le -\inf_{\rho \in K} I(\rho),
\]
where the rate function is the free entropy deficit
\[
I(\rho) = \chi(a,b) - \chi(\rho) \ge 0,
\]
with equality if and only if $\rho = \mu_w$. Since $\nu \neq \mu_w$ by hypothesis, we have $I(\nu) > 0$.

\medskip
\textbf{Step 2: Exponentially Rare Sector.}  
Let $K = \overline{B_\varepsilon(\nu)}$, the closed $\varepsilon$-ball around $\nu$ in the weak topology. The large deviation principle implies
\[
P\big( (A_n,B_n) \in \Gamma_n(\nu,\varepsilon) \big) \le \exp\big( -N^2 ( I(\nu) - \delta_n ) \big)
\]
for some $\delta_n \to 0$. Thus, $\Gamma_n(\nu,\varepsilon)$ is exponentially small, and the corresponding set in $\Omega$ has $P$-measure zero:
\[
\Gamma(\nu,\varepsilon) = \varprojlim \Gamma_n(\nu,\varepsilon).
\]

\medskip
\textbf{Step 3: Convergence to a Constrained Limit.}  
Let $((A_n,B_n))_{n \ge 1} \subset \Gamma(\nu,\varepsilon)$. By compactness of the space of noncommutative laws, there exists a subsequence $(n_k)$ such that the joint *-distribution of $(A_{n_k},B_{n_k})$ converges to a pair $(u,v)$ in a tracial von Neumann algebra $(M,\tau)$. The spectral measure of $w(A_{n_k},B_{n_k})$ converges to $\nu$, so
\[
\tau\big(w(u,v)^k\big) = \int t^k \, d\nu(t), \quad \forall k \in \mathbb{N}.
\]

\medskip
\textbf{Step 4: Emergence of Algebraic Relations and the Quotient $\mathbb{F}_2/N$.}  
Since $\nu \neq \mu_w$, the limit pair $(u,v)$ cannot be freely independent semicirculars. The entropy deficit $I(\nu) > 0$ quantifies this constraint. Let $R$ denote all noncommutative polynomials $r(X,Y)$ such that
\[
r(u,v) = 0 \text{ in } M.
\]
The corresponding group elements $\{ r(a,b) : r \in R \} \subset L(\mathbb{F}_2)$ generate a normal subgroup $N \triangleleft \mathbb{F}_2$. By construction, $(u,v)$ satisfy all relations in $N$, and the von Neumann algebra they generate is
\[
W^*(u,v) \cong L(\mathbb{F}_2/N),
\]
with trace $\tau$ coinciding with the limit of normalized matrix traces.

\medskip
\textbf{Step 5: Words as Images under the Quotient Map.}  
Let
\[
\pi_N : L(\mathbb{F}_2) \to L(\mathbb{F}_2/N)
\]
be the canonical quotient *-homomorphism induced by $\mathbb{F}_2 \to \mathbb{F}_2/N$, with $\pi_N(a) = u$ and $\pi_N(b) = v$. Then for any word $w \in \mathbb{C}\langle X,Y \rangle$,
\[
w(u,v) = \pi_N\big(w(a,b)\big),
\]
and the spectral measure of $w(u,v)$ equals $\nu$, differing from the free law $\mu_w$ due to the relations in $N$.

This completes the proof.
\end{proof}

\begin{cor}
The inverse limit sector
\[
\varprojlim \Gamma_n(\nu,\varepsilon)
\subset \mathscr{S}_\infty \times \mathscr{S}_\infty
\]
does not generate the free group factor $L(\mathbb{F}_2)$ unless $\nu=\mu_w$.
Instead, it generates the quotient algebra $L(\mathbb{F}_2/N)$ determined by the
algebraic constraints enforcing the spectral law $\nu$.
\end{cor}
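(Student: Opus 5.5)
The corollary should follow from Theorem~\ref{T:1} together with Voiculescu's proposition on joint convergence of the spherical ensemble. The statement has two halves, the case $\nu=\mu_w$ and the case $\nu\neq\mu_w$, and I would treat them separately.

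\emph{Case $\nu=\mu_w$.} Every $\varepsilon$-sector contains the generic microstates. By part (2) of the proposition, $\mu_{w(A_n,B_n)}\to\mu_w$ for $\sigma_\infty\otimes\sigma_\infty$-almost every coherent sequence. Hence almost every point eventually lies in $\Gamma_n(\mu_w,\varepsilon)$. By part (1), the joint $*$-distribution of such points converges to a free semicircular pair $(a,b)$. So the limiting algebra is $W^*(a,b)\cong L(\mathbb{F}_2)$, which shows that the inverse-limit sector does generate $L(\mathbb{F}_2)$.

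\emph{Case $\nu\neq\mu_w$.} Take any coherent sequence in $\varprojlim\Gamma_n(\nu,\varepsilon)$ and apply Theorem~\ref{T:1}. This gives a nontrivial normal subgroup $N\triangleleft\mathbb{F}_2$ and a limit pair $(u,v)$ with $W^*(u,v)\cong L(\mathbb{F}_2/N)$ and $\mathrm{Law}_\tau(w(u,v))=\nu$. It remains to show that this algebra is not $L(\mathbb{F}_2)$ with $(u,v)$ realised as the free semicircular generators. I would argue this through the spectral invariant. If $(u,v)$ had the same joint $*$-distribution as $(a,b)$, then $\tau(w(u,v)^k)=\tau(w(a,b)^k)$ for all $k$. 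Since these measures have compact support, the moments determine them, so the spectral law of $w(u,v)$ would be $\mu_w$. This contradicts $\nu\neq\mu_w$. Consequently, the generated algebra is the quotient $L(\mathbb{F}_2/N)$ given by Theorem~\ref{T:1}, Step~4, and not the free group factor.

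\emph{Main obstacle.} The hardest step is making ``does not generate $L(\mathbb{F}_2)$'' precise as a statement about abstract von Neumann algebras, rather than about laws of the distinguished generators. A quotient $L(\mathbb{F}_2/N)$ could a priori be abstractly isomorphic to $L(\mathbb{F}_2)$; for instance, infinite-index quotients may again have free group factors. I would therefore first prove the statement in the sense of \emph{laws}: the pair $(u,v)$ with its trace is not the canonical generating pair of $L(\mathbb{F}_2)$. This is exactly what the moment argument above gives. Only afterwards, if needed, would I use the nontriviality of $N$ to distinguish the algebras. When $N$ has finite index (the Galois Belyi case), $L(\mathbb{F}_2/N)$ is finite-dimensional and hence not a II$_1$ factor, so it is certainly not $L(\mathbb{F}_2)$.
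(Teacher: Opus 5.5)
The paper gives no separate proof of this corollary; it is meant to be read off from Theorem~\ref{T:1}, which is also your route. The genuine gap is that you import $\mathrm{Law}_\tau(w(u,v))=\nu$ from Theorem~\ref{T:1}(2), and this does not follow from membership in $\Gamma_n(\nu,\varepsilon)$. For fixed $\varepsilon$, a limit point only satisfies $d(\mathrm{Law}_\tau(w(u,v)),\nu)\le\varepsilon$. (Step~3 of the paper's proof makes the same leap. Step~2 uses $I(\nu)$ where the LDP only yields $\inf_{\overline{B_\varepsilon(\nu)}} I$.)

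Your own Case~1 argument shows why this breaks Case~2. Suppose $0<d(\nu,\mu_w)<\varepsilon$. By the triangle inequality and $\mu_{w(A_n,B_n)}\to\mu_w$ almost surely, generic microstates eventually lie inside $\Gamma_n(\nu,\varepsilon)$, so the sector's probability tends to $1$. Their limit is the free semicircular pair, which generates $L(\mathbb{F}_2)$. So the relevant dichotomy is whether $\mu_w\in\overline{B_\varepsilon(\nu)}$, not whether $\nu=\mu_w$, and you need the extra hypothesis $\varepsilon<d(\nu,\mu_w)$.

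Under that hypothesis, your moment argument works without Theorem~\ref{T:1} at all. If $(A_n,B_n)\in\Gamma_n(\nu,\varepsilon)$ converges in $*$-distribution to bounded $(u,v)$, the method of moments gives $\mu_{w(A_n,B_n)}\to\mathrm{Law}_\tau(w(u,v))$ weakly. Hence $d(\mathrm{Law}_\tau(w(u,v)),\nu)\le\varepsilon<d(\mu_w,\nu)$, and $(u,v)$ cannot have the joint law of $(a,b)$.

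That law-level conclusion is all you can get, and your ``main obstacle'' worry is well founded: the algebra-level claim fails along some sequences in the sector. Take $w=Y$, let $A_n,B_n\sim\sigma_n$ be independent, and set $B_n'=c_nB_n^3$ with $c_n$ chosen so that $B_n'\in\mathscr{S}_n$. Then $(A_n,B_n')\to(a,b^3/\sqrt5)$, and the law $\nu$ of $b^3/\sqrt5$ is not the semicircle $\mu_w$. These pairs eventually lie in $\Gamma_n(\nu,\varepsilon)$ for every $\varepsilon>0$. Yet $W^*(a,b^3)=W^*(a,b)\cong L(\mathbb{F}_2)$, because $b$ is the cube root of $b^3$.

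The ingredients you borrow from Theorem~\ref{T:1} are also unsupported.
\begin{itemize}
\item Existence of the limit $(u,v)$ needs a uniform norm cutoff. On $\mathscr{S}_n$ only $\frac1n\Tr(A^2)=1$ is fixed, and $\sqrt n\,E_{11}\in\mathscr{S}_n$ has $\frac1n\Tr(A^4)=n$.
\item Step~4 treats polynomials $r(a,b)$ in semicircular elements as elements of $\mathbb{F}_2$, so no normal subgroup $N$ is actually produced.
\item Your finite-index fallback via Theorem~\ref{T:2} fails too. A finite-dimensional $W^*(u,v)$ forces $\mathrm{Law}_\tau(w(u,v))$ to be finitely supported, which is incompatible with any non-atomic $\nu$.
\end{itemize}
Finally, in Case~1 you should apply Voiculescu's theorem to independent draws under $\prod_n\sigma_n\otimes\sigma_n$, with ``eventually in $\Gamma_n$'' in place of membership in the inverse limit. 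Points of $\varprojlim(\mathscr{S}_n,\iota_{m,n})$ are determined by $A_1\in\mathscr{S}_1=\{\pm1\}$ and are not Wigner-like.
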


We now establish the connection to Belyi Maps.
\begin{thm}\label{T:2}
The spectral sector $\Gamma(\nu,\varepsilon)$ corresponds uniquely to a Galois Belyi map with monodromy group $\mathbb{F}_2/N$.
\end{thm}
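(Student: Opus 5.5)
The plan is to obtain Theorem~\ref{T:2} by composing two correspondences already available in the paper. The first is Theorem~\ref{T:1}, read as a map $\Gamma(\nu,\varepsilon)\mapsto N$. The second is the classical Galois correspondence recalled in Section~\ref{sec:preliminaries}, between finite-index normal subgroups $N\triangleleft\mathbb{F}_2$ and isomorphism classes of Galois Belyi maps. The latter sends $N$ to the compactification of the covering of ${\bf P}_\ast$ attached to $N$, with $\mathrm{Deck}(f)\cong\mathbb{F}_2/N$. Concretely, I would proceed in three steps: (i) show that the sector determines $N$ unambiguously; (ii) show that this $N$ has finite index; (iii) apply the Galois correspondence, which is a bijection, to obtain a unique Galois Belyi map whose monodromy group is $\mathbb{F}_2/N$.

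For step (i), Theorem~\ref{T:1} produces $N$ only after passing to a subsequence, as a limit point in the compact space of noncommutative laws. So I would first prove that the relation set $R$ of Step~4 of that proof is the same for every limit point $(u,v)$ arising from $\Gamma(\nu,\varepsilon)$. The idea is to use that the constraint $\mathrm{Law}_\tau(w(u,v))=\nu$ is the only input, and to take $N$ to be the normal subgroup generated by the relations forced by $\nu$, independently of the chosen subsequence. Equivalently, I would define $N_\nu$ as the intersection of the kernels over all limit points. I would also check that shrinking $\varepsilon$ does not change $N_\nu$, so that the sector, and not the pair $(\nu,\varepsilon)$, is the relevant datum. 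Injectivity in the other direction, namely that $N_\nu$ determines the sector, would follow from Step~5: once $N$ is fixed, the law of $\pi_N(w(a,b))$ with respect to the canonical trace on $L(\mathbb{F}_2/N)$ is determined, and it recovers $\nu$.

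Step (ii) is the main obstacle. Nothing in Theorem~\ref{T:1} forces $N$ to have finite index, and a Belyi map requires exactly that. There is also a gap in how $N$ is built. In Step~4 of the proof of Theorem~\ref{T:1}, relations $r(u,v)=0$ are noncommutative polynomials, not group words, so one must first check that they come from group elements. Concretely, one needs $u,v$ to be unitaries with $r$ of the form $g(u,v)-1$ for $g\in\mathbb{F}_2$.

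My first attempt would be to restrict to sectors for which $W^*(u,v)$ is finite-dimensional. Then $L(\mathbb{F}_2/N)$ is finite-dimensional, which happens exactly when $\mathbb{F}_2/N$ is finite, and finite index follows immediately. This is the situation singled out in Step~2 of the subsection ``From microstates to monodromy groups''. A natural sufficient condition to aim for is that $\nu$ be finitely supported: $w(u,v)$ then has finite spectrum. One would then try to propagate finiteness to the generated algebra using the trace $\tau$ and the group structure of the relations. I expect this propagation to be the genuinely hard, and possibly false in general, part. So I would state the theorem explicitly for the class of sectors with $[\mathbb{F}_2:N_\nu]<\infty$.

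With (i) and (ii) in hand, step (iii) is formal. The Galois Belyi map $f_\nu$ attached to $N_\nu$ satisfies $\mathrm{Deck}(f_\nu)\cong\mathbb{F}_2/N_\nu$. Uniqueness up to isomorphism is inherited from the bijectivity of the Galois correspondence.
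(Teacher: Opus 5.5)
\textbf{Your skeleton matches the paper's.} Theorem~\ref{T:1} gives $\Gamma(\nu,\varepsilon)\mapsto N$, and the classical correspondence between finite-index normal subgroups and Galois Belyi maps does the rest. The paper handles your step (ii) in one sentence: $N$ has finite index ``because the microstate sector is nonempty for large $n$''. You are right not to accept this, and right that Step~4 of Theorem~\ref{T:1} never actually produces group relations.

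Nonemptiness of $\Gamma_n(\nu,\varepsilon)$ says nothing about the index of $N$, and finiteness genuinely fails. Take $w=X$ and $\nu=\tfrac12(\delta_{-1}+\delta_{1})\neq\mu_w$.
\begin{itemize}
\item Balanced diagonal $\pm1$ matrices lie in $\mathscr{S}_n$ and have spectral measure within $O(1/n)$ of $\nu$, so every sector is nonempty.
\item Here $\Gamma_n(\nu,\varepsilon)=E_n\times\mathscr{S}_n$ with $E_n=\{A:\ d(\mu_A,\nu)<\varepsilon\}$. So under the conditioned ensemble $B_n$ is still uniform on $\mathscr{S}_n$ and independent of $A_n$, and in any limit $(u,v)$ the element $v$ is semicircular.
\item Hence $W^*(u,v)\supseteq W^*(v)$ is infinite-dimensional. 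Then $W^*(u,v)\cong L(\mathbb{F}_2/N)$ forces $\mathbb{F}_2/N$ to be infinite, and no Galois Belyi map has that monodromy group.
\end{itemize}
The same example ($\nu$ finitely supported, limit algebra infinite-dimensional) shows that your idea of propagating finiteness from a finitely supported $\nu$ cannot work.

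So Theorem~\ref{T:2} cannot be proved in the stated generality, by your route or the paper's. Your restricted version, for sectors with $[\mathbb{F}_2:N_\nu]<\infty$, is a reasonable salvage. But it is not a proof of the stated theorem: the added hypothesis is exactly the theorem's content, and what remains is the classical Galois correspondence.

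\textbf{Even in the restricted version, step (i) is not yet sound.}
\begin{itemize}
\item Defining $N_\nu$ as the intersection of the kernels over all limit points breaks $W^*(u,v)\cong L(\mathbb{F}_2/N_\nu)$ for each individual limit point, which sees only its own $N_i\supseteq N_\nu$. Such an intersection can also have infinite index (the intersection of all finite-index normal subgroups of $\mathbb{F}_2$ is trivial), which would undo (ii).
\item $N$ cannot be read off from the isomorphism type of the algebra. For finite $G$, $L(G)\cong\bigoplus_i M_{d_i}(\C)$ remembers only the degrees of the irreducible representations; for instance $L(D_4)\cong L(Q_8)$, trace included.
\item So $N$ must be defined as a kernel, $N=\ker\bigl(\mathbb{F}_2\to\mathcal{U}(M)\bigr)$, which needs $u,v$ themselves to be unitary, as you suggest. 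But $u,v$ are self-adjoint, being limits of Hermitian matrices, so unitarity forces $u^2=v^2=1$. Then $N\supseteq\langle\langle\gamma_0^2,\gamma_1^2\rangle\rangle$, so $\mathbb{F}_2/N$ is a quotient of $\mathbb{Z}/2*\mathbb{Z}/2$, i.e.\ a (possibly degenerate) dihedral group. Only that narrow family of Galois Belyi maps could ever arise.
\end{itemize}

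\textbf{The converse is not needed and is unsupported.} The statement does not ask for ``$N_\nu$ determines the sector''. Your argument for it rests on Step~5 of Theorem~\ref{T:1}: that the limiting trace is the canonical trace of $L(\mathbb{F}_2/N)$, and that $u,v$ are images of $a,b$ under a quotient map. Theorem~\ref{T:1} establishes neither. Indeed, for nontrivial finite-index $N$ there is no unital $*$-homomorphism $L(\mathbb{F}_2)\to L(\mathbb{F}_2/N)$ at all. A II$_1$ factor is simple, so such a map would be injective, which is impossible into a finite-dimensional algebra.
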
 
\begin{proof}
By Belyi's theorem and Grothendieck's correspondence, isomorphism classes of Galois Belyi maps are in bijection with finite-index normal subgroups of $\mathbb{F}_2$. The subgroup $N$ constructed above has finite index because the microstate sector is nonempty for large $N$, and therefore $\mathbb{F}_2/N$ is finite.  

Hence, the spectral sector $\Gamma(\nu,\varepsilon)$ corresponds uniquely to:
\begin{itemize}
    \item The normal subgroup $N \triangleleft \mathbb{F}_2$,
    \item The quotient algebra $L(\mathbb{F}_2/N)$,
    \item A Galois Belyi map with monodromy group $\mathbb{F}_2/N$.
\end{itemize}

\end{proof}

\subsection{Consequences for Belyi Covers over $\overline{\mathbb{Q}}$}

The main difference when considering Belyi maps over $\overline{\mathbb{Q}}$ (instead of considering the topological version) is that one needs to pass to the profinite completion $\widehat{\mathbb{F}_2}$  of $\mathbb{F}_2$. 
Consider the inverse limit of all finite quotients of $\mathbb{F}_2$:
\[
\widehat{\mathbb{F}_2} := \varprojlim_{N \triangleleft \mathbb{F}_2, \,[\mathbb{F}_2:N]<\infty} \mathbb{F}_2/N,
\]
where the limit is over all finite-index normal subgroups $N \triangleleft \mathbb{F}_2$. This \emph{profinite completion} $\widehat{\mathbb{F}_2}$ is a compact, totally disconnected topological group. Every homomorphism from $\mathbb{F}_2$ to a finite group factors uniquely through $\widehat{\mathbb{F}_2}$.

Its group von Neumann algebra $L(\widehat{\mathbb{F}_2})$ is a type II$_1$ factor, and can be realized as the (ultra) product of the finite group algebras:
\[
L(\widehat{\mathbb{F}_2}) \simeq \prod_{N} L(\mathbb{F}_2/N),
\]
where the product ranges over all finite-index normal subgroups $N \triangleleft \mathbb{F}_2$.

\paragraph{The Total Space of Atypical Sectors.}  
Our main theorem associates to each atypical spectral law $\nu$ a specific quotient algebra $L(\mathbb{F}_2/N_\nu)$. The inverse limit over all $N$ therefore corresponds to the union of all possible atypical microstate sectors in $S_\infty \times S_\infty$, alongside the typical one.

\paragraph{A Dictionary of Constraints.}  
The profinite completion $\widehat{\mathbb{F}_2}$ acts as a universal catalog of all algebraic constraints (relations) that can be asymptotically enforced by the geometry of the spherical ensemble. 
\paragraph{The Structure of $S_\infty$.}  
This gives a natural decomposition of the infinite-dimensional sphere:
\begin{itemize}
    \item \textbf{Generic point (open, dense sector):} Corresponds to the free law $\mu_w$ and generates $L(\mathbb{F}_2)$.
    \item \textbf{Special points (lower-dimensional sectors):} Each corresponds to an atypical law $\nu$ and generates a quotient $L(\mathbb{F}_2/N_\nu)$.
\end{itemize}
The inverse limit
\[
\varprojlim \Gamma_n(\nu, \varepsilon)
\]
over all $\nu$ (or equivalently, all $N$) is not a single sector, but the entire collection of all special points and their limits. Its associated algebra is $L(\widehat{\mathbb{F}_2})$, which retains the information of all finite algebraic approximations.

%
% ---- Bibliography ----
%

\end{document}